\documentclass[letterpaper, 10 pt, conference]{ieeeconf}  
\IEEEoverridecommandlockouts                              

  \let\labelindent\relax
\usepackage{amsmath} 
\usepackage{amssymb}  

\usepackage{tabulary}
\usepackage[english]{babel}
\usepackage{blindtext}
\usepackage[font=footnotesize]{caption}
\usepackage[makeroom]{cancel}
\usepackage{amsfonts}
\usepackage{pgfplots}
\usepackage{amsthm}  
\usepackage{amssymb}
\usepackage{graphicx}
\usepackage{subcaption} 
\usepackage{pgf,tikz}
\usepackage{commath}
\pgfplotsset{ 
  compat=newest, 
   legend style =
  {font=\footnotesize },
  label style = {font=\footnotesize},
every tick label/.append style={font=\footnotesize}
  }
\usepackage{tabularx}
\usepackage{float}
\usepackage{cite}
\usepgfplotslibrary{fillbetween}
  \usepgfplotslibrary{ternary}
\usetikzlibrary{shapes.multipart}
\usepackage{enumitem} 
\usepackage{bbm,xurl} 

\usepackage[strict]{changepage}

\newcommand{\vect}[1]{\boldsymbol{#1}}
\newcommand{\mat}[1]{\boldsymbol{#1}}

\allowdisplaybreaks

\DeclareMathOperator{\diag}{\text{diag}}

\DeclareMathOperator*{\argmax}{\rm{argmax}} 

\newtheorem{remark}{Remark}
\newtheorem{theorem}{Theorem}

\newtheorem{assumption}{Assumption}

\newtheorem{definition}{Definition}

\newtheorem{proposition}{Proposition}

\renewcommand{\eqref}[1]{Eq.~(\ref{#1})}  

\title{\LARGE \bf
Modelling the coevolution of opinion dynamics and decision making in social dilemmas}

\author{Ella C. Davidson, Lorenzo Zino, Ming Cao, and Mengbin Ye
\thanks{E. C. Davidson and M. Ye are with the Adelaide Data Science Centre, Adelaide University, Adelaide, Australia (\texttt{\{connor.davidson;ben.ye\}@adelaide.edu.au}). L. Zino is with the Department of Electronics and Telecommunications, Politecnico di Torino, Torino, Italy (\texttt{lorenzo.zino@polito.it}). M. Cao is with the Engineering and Technology Institute Groningen, University of Groningen, Groningen, The Netherlands (\texttt{m.cao@rug.nl}). E. C. Davidson is supported by the Australian Government with a Research Training Program scholarship. M. Ye is supported by the Australian Government through the Australian Research Council (DE250100199) and Office of National Intelligence (NI240100203).}
}

\begin{document}

\maketitle
\thispagestyle{empty}

\begin{abstract}
This paper proposes a mathematical model for the coevolution of actions and opinions for a population facing a social dilemma. In particular, we assume each person participates in a Public Goods Game (PGG), with their action being to cooperate or defect, and holds an opinion about which action they prefer. We propose a payoff function that combines the PGG with the Friedkin--Johnsen model from opinion dynamics to form a coevolutionary game. According to a discrete-time process, players asynchronously update their actions and opinions, aiming to maximise their individual payoff for the coevolutionary game using myopic best-response. We study the equilibria and provide conditions for the existence of the all-defection and all-cooperation consensus equilibria. We also establish conditions for global convergence to the all-defection equilibrium.
\end{abstract}

\section{Introduction}\label{sec:intro}
The emergence and persistence of widespread cooperation within a population when faced with a social dilemma is a fundamental problem of interest in biological, social and behavioural sciences~\cite{kollock_dilemmas_1998, dawes1980social}. This is partly because classical insights from game theory suggest that defection (also known as free riding) is often the best strategy from the perspective of a selfish rational individual. And yet, cooperation, which provides greater benefits for the entire population, is observed to persist in many real world scenarios. 

Collective action, i.e. the mobilisation of a large group to participate in a movement in pursuit of institutional or societal change, is a classical social dilemma~\cite{cantoni_protests_2024}. Participating in collective action is costly at an individual level and requires enough individuals to have a chance of success, but when successful, collective action often brings benefits to the entire population. More interestingly, there are cases of progressive movements (e.g. same-sex marriage and Black Lives Matter), in which many people participate even though they are not part of the repressed minority that would most benefit from the movements' success. 

Various factors that allow cooperation to emerge and persist over time have been proposed and studied. These include, and are not limited to, peer punishment~\cite{abbink_punish_2017}, reciprocity~\cite{gouldner_reciprocity_1960, park_opinions_2021}, and social influence~\cite{nowak_cooperation_2006}. While these factors are social in nature, a key psychological factor in determining a person's action is their own belief~\cite{henry2003beyond}. This is true for many contexts, including especially social dilemmas and collective action, as observed empirically within the literature on the social psychology of protest movements~\cite{thomas_protest_2024}. Moreover, generally, a protester's beliefs, attitudes, and psychological drivers (all of which evolve as the movement gains or loses traction) are central to their engagement with a movement~\cite{louis2022failure,nardini_BLM_2021}.

 

Despite empirical evidence of an interdependence between an individual's action and opinion, there have been limited efforts to combine the two into a unified mathematical model. The continuous-opinion discrete-action (CODA) model framework provided important initial contributions~\cite{martins_coda_2008,ceragioli_coda_2018}, operating under the assumption that an individual's action is a direct quantisation of their opinion, limiting the possibility of capturing the fundamental tension between cooperation and defection in social dilemmas. More recently, a mathematical model was proposed using a unified game-theoretic formulation, in which individuals played coordination games on a network and exchanged opinions on preferred strategies~\cite{aghbolagh_coevolutionary_2023}. The resulting coevolution of actions and opinions was studied analytically, and emergent phenomena such as unpopular norms and polarisation were identified~\cite{aghbolagh_coevolutionary_2023}. 
The model was extended by~\cite{liang_coevolution_2025} to consider an anti-coordination game instead. 

In this paper, we consider the coevolutionary dynamics of actions and opinions of a group of people in the context of social dilemmas. 
In more detail, individuals participate in a Public Goods Game (PGG), a classical game commonly used for modelling social dilemmas due to its flexibility and broad applicability~\cite{Bramoull2007}, and decide whether to cooperate or defect. An individual's strategic decision to cooperate or defect coevolves with their opinion (or attitude/belief) towards cooperation. In \cite{park_opinions_2021}, a PGG was integrated within a continuous-time opinion dynamics model, yielding a framework similar to CODA models. Players revise their opinion based on a realistic opinion dynamics model that accounts for social interactions and the payoff of a PGG, and then make decisions on their actions (the amount to contribute to the public good) based on their opinion in a stochastic manner. This, however, illustrates a one-way relationship, not an interdependent coevolution of actions and opinions.


Here, we adopt the game-theoretic coevolutionary approach~\cite{aghbolagh_coevolutionary_2023} and propose a joint payoff function for the coevolutionary game that combines i) the payoff from the PGG, ii) opinion dynamics evolving through social influence (inspired by the Friedkin--Johnsen model~\cite{friedkin_social_1990}), and iii) a term that captures an individual's desire for consistency between their action and opinion. We consider a population of players interacting on a network to share opinions, revising their actions and opinions asynchronously at discrete timesteps using myopic best-response updating. Our work contrasts the coevolutionary models in \cite{aghbolagh_coevolutionary_2023,liang_coevolution_2025}, which consider coordination and anti-coordination games rather than social dilemmas, and offers a novel lens to study the emergence and persistence of cooperation in social groups.

The main contributions of the paper are as follows. We propose a novel coevolutionary game, and obtain the explicit expression for the best-response updating, showing how a player's best-responding action and opinion are shaped by others' opinions and the PGG parameters. We study the equilibria, providing sufficient conditions for the existence of the all-defection and all-cooperation equilibria, and identify a sufficient condition for the all-defection equilibrium to be the unique equilibrium with a global stability property. The rest of the paper is organised as follows. Section~\ref{sec:model} presents the coevolutionary game and associated dynamics. Section~\ref{sec:results} contains the main results and Section~\ref{sec:conclusion} concludes the paper.


\section{The Model}\label{sec:model}

\subsection{Preliminaries}
The set of real and non-negative integer numbers are denoted by $\mathbb{R}$ and $\mathbb{Z}_+$ respectively. Bold lower case and upper case fonts denote a vector $\vect{x}$ and matrix $\vect{A}$, respectively, with the $i^{th}$ component being $x_i$ and the $j^{th}$ entry in the $i^{th}$ row being $a_{ij}$, respectively. The column vector of all ones and all zeros is denoted by $\vect{1}$ and $\vect 0$, respectively, and the identity matrix is $\vect{I}$; the appropriate dimension is understood within the context of its use. 

A weighted network is defined as a tuple $\mathcal{G} = (\mathcal{V}, \mathcal{E}, \mat W)$, where $\mathcal{V}=\{1,\dots,n\}$ is the set of $n$ nodes; $\mathcal{E}\subseteq\mathcal{V}\times\mathcal{V}$ is the edge set, whereby $(i,j)\in\mathcal{E}$ iff there is a link going from node $i$ to $j$, and $\boldsymbol{W}\in\mathbb{R}_+^{n\times n}$ is the weight matrix, so that entry $w_{ji}>0\Longleftrightarrow (i,j) \in \mathcal{E}$. A network is undirected iff $\vect{W}=\vect{W}^\top$. We assume that $\mat W$ is \textit{row-stochastic}, i.e., the entries of each row add to $1$, that is $\vect{W1}=\vect 1$.

A game $\Gamma(\mathcal{V}, \mathcal{A}, \boldsymbol{\pi})$ is defined by a set of players $\mathcal{V}=\{1,\dots,n\}$; a set of strategies $\mathcal{A}$ that the players can choose (here assumed to be the same for all agents); and a payoff vector $\boldsymbol{\pi}=[\pi_i, \dots, \pi_n]^\top$, where $\pi_i:\mathcal{A}^n\rightarrow\mathbb{R}$ is the payoff function for player~$i\in \mathcal{V}$. We let $z_i \in \mathcal{A}$ be the strategy of player $i$, which can be a scalar or vector and discrete or continuous. The vector $\vect{z}=[z_1,\dots,z_n]\in\mathcal A^{n}$ gathers the strategies of all players. Given a player $i\in\mathcal V$, we denote by $\vect{z}_{-i}=[z_1,z_2,\dots,z_{i-1},z_{i+1},\dots,z_n]\in\mathcal A^{n-1}$ the vector of the strategies of all players, except for player~$i$. The payoff that player~$i$ receives for selecting a strategy $z_i\in\mathcal A$ is typically dependent on the action of the others $\vect{z}_{-i}$, so it is denoted as $\pi_i(z_i,\vect{z}_{-i})$. 
Given a game, we define the set of best-response strategies and a Nash equilibrium as follows.
\begin{definition}[Best response and Nash Equilibrium]\label{def:BR}
    Given a game $\Gamma(\mathcal{V}, \mathcal{A}, \boldsymbol{\pi})$, the best-response strategies for player~$i\in \mathcal{V}$, given the state of the system $\vect{z}\in\mathcal A^n$ are defined as 
    \begin{equation}\label{eq:best_response}
        \mathcal{B}_i(\pi_i(\cdot,\vect{z}_{-i})):= \argmax\nolimits_{{z_i}\in \mathcal{A}} \ \vect \pi (z_i,\vect{z}_{-i}).
    \end{equation}
    A Nash equilibrium $\vect{z}^*=[z_1^*,\dots,z_n^*]\in\mathcal A^{n}$ is a strategy profile such that $z_{i}^* \in \mathcal{B}_i( \pi_i(\cdot,\boldsymbol{z}^*_{-i})),\, \forall\,i\in\mathcal V$.
\end{definition}


\subsection{The PGG and Actions}


We consider the set of players $\mathcal V$ participating in a PGG. Each player can decide whether to contribute to the public good or not, referred to as ``cooperate" or ``defect", respectively. We denote player $i$'s decision with the term $x_i \in \{0,1\}$, with $x_i=0$ and $x_i=1$ representing player~$i$ deciding to defect and cooperate, respectively. All $n$ player decisions are held in the action vector $\vect{x}=[x_1, \dots, x_n]^\top \in \{0,1\}^n$. 

If player~$i$ decides to cooperate, they contribute a unit amount to the public good. Regardless of whether they have cooperated or not, each player receives a reward, with the amount depending on how many players have cooperated. 
Thus, a player's payoff depends on the player's own action and the actions of the other $n-1$ players. Here, we consider a simple implementation of the PGG, with all players contributing a unit amount to the same public good if they cooperate, and a constant $r$, with $1<r<n$, that represents the public good multiplier~\cite{Bramoull2007, govaert_rationality_2021}. Thus,
\begin{align}\label{eq:payoff_PGG}
    \pi_i^{a}(x_i,\vect{x}_{-i}) &= x_i \Bigg[  \dfrac{r\left(\sum^n_{j=1, j\neq i}x_j + 1\right)}{n} -1 \Bigg] \notag \\ 
    &+ (1-x_i)\dfrac{r}{n}\sum\nolimits^n_{j=1, j\neq i}x_j,
\end{align}
where the first term (which is non-zero if player~$i$ cooperates) accounts for the share of the total reward received by player~$i$ if they cooperate, less their unit contribution), and the second term (which is non-zero if player~$i$ defects) accounts for the share of the total reward received by player~$i$ if they defect. 


\subsection{Opinion Dynamics}
To model the evolution of the players' opinions we opt for the use of the empirically validated Friedkin--Johnsen (FJ) model~\cite{friedkin_social_1990}. Each player~$i$ of a population $\mathcal V=\{1,\dots,n\}$ is assigned a continuous random variable $y_i\in [0,1]$, which represents player~$i$'s opinion. All $n$ of the players' opinions are held in the opinion vector $\vect{y}=[y_1, \dots, y_n]^\top \in [0,1]^n$. Players share their opinion on a social network $\mathcal G=(\mathcal V, \mathcal E,\mat W)$, where the entry $w_{ij}$ of $\mat W$ represents the social influence of player~$j$ on player~$i$. As detailed in the literature~\cite{proskurnikov_tutorial_2017}, the FJ model can be formulated as a game~\cite{ghaderi_opinion_2014}, where player~$i$'s payoff for adopting an opinion ${y}_i$, given an opinion vector $\vect y$ is defined as
\begin{equation}\label{eq:opinion}
    \pi_i^{o}(y_i,\vect{y}_{-i}) = -\dfrac{1}{2}(1-\gamma_i)\sum_{j \in \mathcal{V}}w_{ij}(y_i - y_j)^2 - \dfrac{1}{2}\gamma_i(y_i - u_i)^2,
\end{equation}
where the parameter $\gamma_i\in[0,1]$ represents player~$i$'s level of attachment to their constant prejudice $u_i\in[0,1]$. The best-response to this payoff yields the FJ opinion dynamics model~\cite{friedkin_social_1990}. Note that the FJ model, as standard in opinion dynamics, allows $w_{ii} > 0$ (self-loops in $\mathcal G$), but in a game-theoretic context corresponds to the non-standard concept of playing a game against oneself. This can be addressed by a minor adjustment to \eqref{eq:opinion}, see \cite{aghbolagh_coevolutionary_2023}. 

\subsection{Coevolutionary Model}

\subsubsection*{Coevolutionary Game} We are now in a position to introduce the coevolutionary game that combines the public goods and opinion dynamics game in an interdependent manner. In this setting, each player plays a PGG, where they can decide to defect, $x_i=0$, or to cooperate, $x_i=1$. Simultaneously, each player holds an opinion, $y_i \in [0,1]$, representing their level of support (or preference) for the action of cooperating in the PGG. Specifically, a player~$i$ fully supporting (preferring) defection and cooperation correspond to $y_i=0$ and $y_i=1$, respectively. We make the important distinction between whether a player chooses to defect or cooperate ($x_i$) and whether a player supports it ($y_i$), noting that these two may not always be aligned. Player~$i$'s strategy is defined as $z_i = (x_i, y_i)$, and we refer to the vector $\vect{z}= (\vect x, \vect y)$ as the \textit{state of the system} and strategy profile interchangeably.

Given the state of the system $\boldsymbol{z}=(\boldsymbol{x},\boldsymbol{y})$, we propose that the coevolutionary game has the following payoff function for player~$i$ selecting strategy $z_i = (x_i, y_i)$:
\begin{align}\label{eq:full_payoff}
    \pi_i(z_i,\vect{z}_{-i}) &= \alpha_i\pi_i^{a}(x_i,\vect{x_{-i}}) + \beta_i\pi_i^{o}(y_i,\vect{y_{-i}}) \notag \\ 
    & \qquad - \dfrac{1}{2}\lambda_i(x_i-y_i)^2,
\end{align}
with $\pi_i^{a}(x_i,\vect{x}_{-i})$ and $\pi_i^{o}(y_i,\vect{y}_{-i})$ from \eqref{eq:payoff_PGG} and \eqref{eq:opinion}, respectively. 
The parameters $\alpha_i,\beta_i,\lambda_i \in [0,1]$ represent player~$i$'s relative weight of actions, opinions, and desire to ensure consistency between their action and opinion (self-consistency), respectively. Without any loss in generality, we assume that $\lambda_i=1-\alpha_i-\beta_i$, from which \eqref{eq:full_payoff} can be readily interpreted as a convex combination of the three terms, one term proportional to the payoff of the corresponding PGG, one term proportional to the payoff of the corresponding opinion dynamics, and a third term that accounts for self-consistency. A major conceptual foundation of our model is to explicitly distinguish between action and opinion, while introducing the self-consistency term to reduce the total payoff as the player's action and opinion become less aligned.  All player payoffs for the coevolutionary game are stored in the vector $\vect{\pi}$, i.e. $\vect{\pi} = [\pi_1, \cdots, \pi_n]^\top$. We can now formally define the coevolutionary game as follows.

\begin{definition}[Coevolutionary game] \label{d:coev_game}
    The coevolutionary game is $\Gamma(\mathcal{V}, \mathcal{A}, \boldsymbol{\pi})$, played by the set of players $\mathcal V$, on $\mathcal G = (\mathcal{V},\mathcal E, \mat W)$, with strategy set $\mathcal A = \{0,1\}\times[0,1]$ and payoff vector function $\vect{\pi}$, where entry $\pi_i$ is defined in~\eqref{eq:full_payoff}.
\end{definition}

In order to characterise the best-response strategies to the coevolutionary game, we introduce a \textit{discriminant} quantity:
\begin{align}\label{eq:discriminant}
    \delta_i(z_i,\vect{z}_{-i}) &=  \alpha_i\left( \dfrac{r}{n}-1 \right) \\ \notag
    &+ \frac{\beta_i\lambda_i}{\beta_i+\lambda_i}\bigg(\gamma_iu_i+(1-\gamma_i)\sum_{j\in\mathcal{V}}w_{ij}y_j-\frac{1}{2}\bigg).
\end{align} 

The following result provides the best-response explicitly, by utilising the discriminant $\delta_i$. The proof is in the Appendix.
\begin{proposition}\label{prop:BR}
    Consider the coevolutionary game from Definition~\ref{d:coev_game}. Then,  
    given a state $\vect z\in\mathcal A^n$, for a player $i\in\mathcal V$, $(\bar x_i,\bar y_i)\in \mathcal{B}_i(\vect \pi_i(\cdot,\boldsymbol{z}_{-i}))$ if and only if
    \begin{align}
        \bar{x}_i &= \hat s_i(\delta_i(\vect{y})) \label{eq:BR_action} \\
        \bar y_i &= \dfrac{\beta_i(1-\gamma_i)\sum_{j\in\mathcal{V}}w_{ij}y_j+\beta_i\gamma_iu_i+\hat s_i(\delta_i(\vect y))\lambda_i}{\beta_i+\lambda_i},\label{eq:BR_opinion}
    \end{align}
    with 
    \begin{equation}
        \hat s_i(\delta_i(\vect{y})) = \left\{\begin{array}{ll}0&\text{if }\delta_i(\vect{y})\leq 0\\
        1&\text{if }\delta_i(\vect{y})\geq0.\end{array}\right.
    \end{equation}    
\end{proposition}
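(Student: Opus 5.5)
The plan is to maximise $\pi_i(x_i,y_i,\vect z_{-i})$ over $\mathcal A=\{0,1\}\times[0,1]$ in two stages. First, for each fixed action $x_i\in\{0,1\}$, I will maximise over the opinion $y_i\in[0,1]$. Then I will compare the two resulting optimal values. This reduces the problem to a one-dimensional concave maximisation followed by a binary comparison, and the discriminant $\delta_i$ should turn out to be exactly the difference of the two optimal values.

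\emph{Step 1 (inner maximisation).} Fix $x_i$. The only terms of \eqref{eq:full_payoff} that depend on $y_i$ are $\beta_i\pi_i^o$ and the self-consistency term. Using $\sum_j w_{ij}=1$, I expand
\begin{align*}
g(y_i)&=-\tfrac12\beta_i(1-\gamma_i)\sum_{j}w_{ij}(y_i-y_j)^2-\tfrac12\beta_i\gamma_i(y_i-u_i)^2\\
&\qquad-\tfrac12\lambda_i(x_i-y_i)^2 .
\end{align*}
If $\beta_i+\lambda_i>0$, this is a strictly concave quadratic with leading coefficient $-\tfrac12(\beta_i+\lambda_i)$. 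Setting the derivative to zero gives the stationary point
\[
y_i^\star(x_i)=\frac{\beta_i\big((1-\gamma_i)\sum_j w_{ij}y_j+\gamma_iu_i\big)+\lambda_i x_i}{\beta_i+\lambda_i}.
\]
This point is a convex combination of numbers in $[0,1]$, so it lies in $[0,1]$ and is the unique maximiser over the feasible interval. (The self-loop term $w_{ii}$ can be handled by the adjustment cited from \cite{aghbolagh_coevolutionary_2023}, or treated as fixed at the current $y_i$. I will follow the formula as stated, with $y_j$ taken from the current state.) Substituting $x_i=\hat s_i$ gives \eqref{eq:BR_opinion}.

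\emph{Step 2 (outer comparison).} Let $m_i=(1-\gamma_i)\sum_j w_{ij}y_j+\gamma_iu_i$. I will compute $V(x_i)=\alpha_i\pi_i^a(x_i,\vect x_{-i})+g(y_i^\star(x_i))$ and take the difference $V(1)-V(0)$. From \eqref{eq:payoff_PGG}, the PGG part contributes $\alpha_i(r/n-1)$, since the terms $\tfrac rn\sum_{j\ne i}x_j$ cancel.

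For the opinion part, I use the identity for the maximum of a sum of quadratics: maximising $-\tfrac12 a(y-m)^2-\tfrac12 b(y-x)^2$ gives $-\tfrac12\frac{ab}{a+b}(x-m)^2$ plus terms independent of $x$. Here $a=\beta_i$ and $b=\lambda_i$, after grouping the $\beta_i$-weighted terms into a single square centred at $m_i$ plus a constant. Hence the opinion contribution to $V(1)-V(0)$ is
\[
-\tfrac12\frac{\beta_i\lambda_i}{\beta_i+\lambda_i}\big[(1-m_i)^2-m_i^2\big]=\frac{\beta_i\lambda_i}{\beta_i+\lambda_i}\Big(m_i-\tfrac12\Big).
\]
Therefore $V(1)-V(0)=\delta_i$. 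The action $x_i=1$ is optimal if and only if $\delta_i\ge 0$, and $x_i=0$ is optimal if and only if $\delta_i\le0$, with both optimal when $\delta_i=0$. Combining this with the uniqueness of the optimal $y_i$ for each $x_i$ gives the ``if and only if'' characterisation.

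\emph{Main obstacle.} The step needing most care is the algebraic completion of squares in Step 2 and the bookkeeping of the constant terms, which must cancel in the difference. I also need to handle degenerate cases. If $\beta_i+\lambda_i=0$, the fraction in \eqref{eq:discriminant} is undefined, $y_i$ is arbitrary, and the statement must be read with that convention. If $\lambda_i=0$, the optimal $y_i$ no longer depends on $x_i$, and $\delta_i$ correctly reduces to the PGG term.
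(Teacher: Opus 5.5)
Your proposal is correct and follows the same route as the paper's proof: fix $x_i\in\{0,1\}$, maximise the strictly concave quadratic in $y_i$ through its first-order condition, then identify $\delta_i$ with $\pi_i((1,y_i^{1}),\vect z_{-i})-\pi_i((0,y_i^{0}),\vect z_{-i})$. Your completing-the-square identity carries out the verification that the paper leaves to the reader, and two of your checks are small but useful additions the paper passes over: that the stationary point lies in $[0,1]$, and that $\beta_i+\lambda_i>0$ is required.
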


\begin{remark}\label{rem:br}When $\delta_i = 0$, the set of best responses comprises two elements: one with $\bar x_i=0$ and one with $\bar x_i=1$, and the corresponding values of $\bar y_i$, which differ, as per \eqref{eq:BR_opinion}.
\end{remark}
Finally, while the $\delta_i$ is presented in \eqref{eq:discriminant} as a function of $\vect z = (\vect x, \vect y)$, it turns out it is in fact only a function of $\vect y$. This property will play a key role in our analysis of the updating dynamics in the sequel. We note that~\eqref{eq:discriminant} suggests a one-way relationship between the players' opinions and actions, as we observed in~\cite {park_opinions_2021}. However, the formulation of the payoff in~\eqref{eq:full_payoff} does not. We conjecture that this results from the linear payoff formulation of the PGG; if a non-linear PGG payoff were implemented, this may no longer be the case.

\subsubsection*{Player Dynamics}
Let us now define the update dynamics for the players, who are repeatedly playing the coevolutionary game. At each discrete timestep $t \in \mathbb{Z}_+$, we posit that a set $\mathcal{R}(t) \subseteq \mathcal{V}$ of players becomes active and revises their strategy, and $\mathcal R(t)$ satisfies the following property.

\begin{assumption}\label{assumption:revision_sequence}
    (Revision sequence). There exists a constant $T<\infty$ such that $\cup_{s=0}^{T-1}\mathcal{R}(t+s) = \mathcal{V},$ for any $t \geq 0$.
\end{assumption}

\begin{remark}
    Assumption~\ref{assumption:revision_sequence} covers many synchronous and asynchronous update rules. Synchronous updating corresponds to $\mathcal{R}(t) = \mathcal{V}$ for all $t$, while asynchronous updating corresponds to $|\mathcal{R}(t)| = 1$, for all $t$. The condition of $\cup_{s=0}^{T-1}\mathcal{R}(t+s) = \mathcal{V}$ mimics typical assumptions in opinion dynamics with time-varying networks~\cite{proskurnikov_tutorial_2017}, and ensures that each player activates at least once within a time window of $T$, and that this occurs infinitely often as $t\to\infty$. 
\end{remark}

We assume that active players in $\mathcal R(t)$ will always update their future action and opinion by adopting the best-response strategy for the coevolutionary game given $\vect z(t)$. Hence, at timestep $t$, each player~$i \in \mathcal{R}(t)$ updates their state as
\begin{equation}
    z_i(t+1) \in \mathcal{B}_i(\pi_i(\cdot,\boldsymbol{z}_{-i}(t)))
\end{equation}
where $\mathcal{B}_i$ is the set of best-responses strategies computed in Proposition~\ref{prop:BR}. Here, we make a mild assumption that if a player's best response comprises two elements (see Remark~\ref{rem:br}), they will opt to defect due to inherit selfishness. Therefore, based on Proposition~\ref{prop:BR}, an active player~$i$ with state $z_i(t)=(x_i(t),y_i(t))$ is updated as 
\begin{align}
    x_i(t+1) &= s_i(\vect{y}(t)) \label{eq:update_action} \\
    y_i(t+1) &= \dfrac{\beta_i(1\!-\!\gamma_i)\sum_{j\in\mathcal V} w_{ij}y_j(t)\!+\!\beta_i\gamma_iu_i\!+\!s_i(\vect y(t))\lambda_i}{\beta_i+\lambda_i} \label{eq:update_opinion}
\end{align}
where
\begin{equation} \label{eq:update_s}
    s_i(\vect{y}(t)) = \left\{\begin{array}{ll}0&\text{if }\delta_i(\vect{y}(t))\leq 0\\
    1&\text{if }\delta_i(\vect{y}(t))>0,\end{array}\right.
\end{equation}
with $\delta_i(\vect{y}(t))$ defined in \eqref{eq:discriminant}. Any player $j \notin \mathcal{R}(t)$ is inactive, and thus $z_j(t+1)=z_j(t)$. 

We now point out some important features of the player update dynamics, and relevant interpretations within the context of the modelling problem. 

First, notice from \eqref{eq:update_action} that $x_i(t+1)$ is determined solely by $\delta_i$, which is a function of $\vect y(t)$. In other words, the players decision to defect or cooperate can potentially (but not necessarily) be influenced by other players' opinions, but not actions. However, $\delta_i$ also contains the parameters $r,n,\alpha_i$ which are associated with the PGG, and as we illustrate in the sequel, there are parameter values for which the sign of $\delta_i$ is independent of $\vect y$. This is a key departure from the coevolutionary model in \cite{aghbolagh_coevolutionary_2023}, which considered coordination games on networks with Friedkin--Johnsen opinion dynamics, and the equivalent $\delta_i$ that determined a player's action was dependent on both $\vect x$ and $\vect y$.

Second, a player's new opinion is a convex combination of their neighbours' current opinions, their existing prejudice, and their new action. This is a natural extension of the Friedkin--Johnsen model, which considers a convex combination of the former two quantities. It also illustrates how a player's new opinion is impacted by both their neighbours' current opinions and their own decision, illustrating the coevolving nature of the dynamics. Because $\alpha_i+\beta_i+\lambda_i = 1$, the dynamics are clearly influenced by the relative weighting that a player places on the PGG ($\alpha_i$), the opinion dynamics ($\beta_i$) and the need to be self-consistent ($\lambda_i$).

In summary, the \textit{coevolutionary game} $\Gamma=(\mathcal{V}, \mathcal{A}, \vect{\pi})$ is defined on a network $\mathcal{G}$ as in Definition~\ref{d:coev_game}. The \textit{coevolutionary model} is the dynamical system arising from the game being played repeatedly over discrete time-steps by the players, in which active players at each time-step update their action and opinion using a best-response approach. 

\section{Main Results}\label{sec:results}
Here, we study the equilibria of the system, characterizing them as Nash equilibria of the coevolutionary game, discussing the existence of some equilibria of interest (namely, those in which the entire population defect or cooperate, respectively), and show that under certain parameter conditions, there is global convergence to a unique equilibrium.

\subsection{Classification of Equilibria}
To characterise the equilibria of the coevolutionary model, we introduce some useful terminology. A state $\vect z^* = (\vect x^*, \vect y^*)$ is an equilibrium if \eqref{eq:update_action} and \eqref{eq:update_opinion} hold with $x_i(t+1) = x_i(t) = x_i^*$ and $y_i(t+1)=y_i(t)=y_i^*$ for all $i\in\mathcal V$.

Our results begin by relating the equilibria of the coevolutionary model to Nash equilibria of the coevolutionary game. We omit the proof for brevity, noting it can be obtained directly from Definition~\ref{d:coev_game} and Eqs.~(\ref{eq:update_action})--(\ref{eq:update_s}).

\begin{proposition}
   Under Assumption~\ref{assumption:revision_sequence}, the set of equilibria of the dynamics for the coevolutionary model in Eqs.~(\ref{eq:update_action})--(\ref{eq:update_s}) coincides with the set of Nash Equilibria of the coevolutionary game in Definition \ref{d:coev_game}. 
\end{proposition}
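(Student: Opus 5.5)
The plan is to prove the two inclusions separately, using Proposition~\ref{prop:BR} to translate the Nash condition into explicit fixed-point equations. The central observation is that the best-response set $\mathcal{B}_i(\pi_i(\cdot,\vect z_{-i}))$ depends on $\vect z_{-i}$ only through $\vect y$ (more precisely, through $\sum_j w_{ij}y_j$). Here I would use a self-loop convention under which $w_{ii}y_i$ is treated as fixed at the current state, as in the FJ game formulation. The update \eqref{eq:update_action}--\eqref{eq:update_opinion} is exactly a selection from this set.

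\emph{Equilibrium $\Rightarrow$ Nash.} Let $\vect z^*$ be an equilibrium. By definition, $x_i^*=s_i(\vect y^*)$ and $y_i^*$ equals the right-hand side of \eqref{eq:update_opinion} evaluated at $\vect y^*$, for every $i\in\mathcal V$. Assumption~\ref{assumption:revision_sequence} is used here only to guarantee that each player is eventually active, so that the fixed-point equations genuinely constrain every coordinate. Since $s_i(\vect y^*)=\hat s_i(\delta_i(\vect y^*))$ for one admissible choice of $\hat s_i$ (namely the value $0$ when $\delta_i=0$), Proposition~\ref{prop:BR} gives $(x_i^*,y_i^*)\in\mathcal{B}_i(\pi_i(\cdot,\vect z^*_{-i}))$. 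Hence $\vect z^*$ is a Nash equilibrium by Definition~\ref{def:BR}.

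\emph{Nash $\Rightarrow$ equilibrium.} Let $\vect z^*$ be a Nash equilibrium. By Proposition~\ref{prop:BR}, for each $i$ we have $x_i^*=\hat s_i(\delta_i(\vect y^*))$ and $y_i^*$ satisfies \eqref{eq:BR_opinion}. If $\delta_i(\vect y^*)\neq 0$, then $\hat s_i=s_i$ is uniquely determined, and \eqref{eq:BR_opinion} coincides with \eqref{eq:update_opinion}. So whenever $i$ is active, $z_i(t+1)=z_i^*$. Inactive players keep their state trivially. Thus $\vect z^*$ is invariant under the dynamics for any revision sequence $\mathcal R(t)$.

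The main obstacle is the tie case $\delta_i(\vect y^*)=0$ with $x_i^*=1$. Such a state is a Nash equilibrium by Remark~\ref{rem:br}, but the selfish tie-breaking rule \eqref{eq:update_s} would move player $i$ to $x_i=0$ and change $y_i$ by $-\lambda_i/(\beta_i+\lambda_i)$. I would first try to rule this configuration out directly. Substituting $x_i^*=1$ and the corresponding $y_i^*$ into $\delta_i=0$ gives a single equation in the parameters and $\sum_j w_{ij}y_j^*$. If such a profile exists, I do not expect to exclude it in general. In that case I would make explicit that the statement is understood with the best-response set refined by the same tie-breaking convention used in the dynamics, i.e., $\mathcal{B}_i$ restricted to $\bar x_i=s_i(\vect y)$. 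Under that convention the argument above closes both inclusions verbatim. The alternative is to state that the coincidence holds up to this measure-zero set of parameter and state configurations where $\delta_i=0$.
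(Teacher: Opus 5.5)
Your two-inclusion argument through Proposition~\ref{prop:BR} is the argument the paper has in mind. The paper omits the proof, saying it follows directly from Definition~\ref{d:coev_game} and the update rule. Your inclusion ``equilibria of the dynamics $\subseteq$ Nash equilibria'' is fine as written.

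The point you leave open, however, has a definite answer. The tie configuration does occur, so your plan to ``first try to rule it out'' cannot succeed, and the statement as literally written is false. Here is a concrete instance:
\begin{itemize}
\item Take Assumption~\ref{a:0} with $w_{ii}=0$, so no self-loop convention is involved. Choose parameters for which \eqref{eq:all_D_unique} holds with equality for every $i$, e.g.\ $n=3$, $r=3/2$, $w_{ij}=1/2$ for $j\neq i$, $\alpha_i=1/5$, $\beta_i=\lambda_i=2/5$.
\item At $(\vect{1},\vect{1})$ one has $\sum_j w_{ij}y_j=1$, hence $\delta_i=\alpha_i(r/n-1)+\frac{\beta_i\lambda_i}{2(\beta_i+\lambda_i)}=0$ and $y_i^1=1$. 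Indeed $(1,1)$ and $(0,1/2)$ both give player~$i$ payoff $1/10$.
\item So $(\vect{1},\vect{1})$ is a Nash equilibrium. Yet $s_i(\vect{1})=0$, so any active player moves to $(0,1/2)$, and it is not an equilibrium of the dynamics.
\end{itemize}
This parameter choice is covered by Theorem~\ref{theorem:equilibria}. Its uniqueness claim under the non-strict inequality \eqref{eq:all_D_unique} is true for the dynamics' equilibria but false for Nash equilibria, so the paper itself implicitly relies on the tie-broken notion.

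Your refinement (restrict $\mathcal{B}_i$ to $\bar x_i=s_i(\vect{y})$) is therefore required, not one option among two. Under it your argument closes verbatim. Equivalently, the sharp statement your argument proves is: every equilibrium is a Nash equilibrium, and a Nash equilibrium is an equilibrium iff no player has $x_i^*=1$ with $\delta_i(\vect{y}^*)=0$.

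Your genericity fallback also needs rephrasing. For fixed parameters under Assumption~\ref{a:0}, each $\vect{x}\in\{0,1\}^n$ admits exactly one compatible opinion vector $\vect{y}^*(\vect{x})=(\mat{I}-\mat{\phi}\mat{W})^{-1}(\mat{I}-\mat{\phi})\vect{x}$ (cf.\ the proof of Proposition~\ref{prop:consensus}). Candidate equilibria are thus finitely many isolated points, and a ``measure-zero set of states'' carries no content. The exceptional set lives in parameter space: it is the finite union of the hypersurfaces $\delta_i(\vect{y}^*(\vect{x}))=0$ over $\vect{x}$ with $x_i=1$.

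Moreover, the proposition assumes only Assumption~\ref{assumption:revision_sequence}, so boundary parameters are in scope. If $\alpha_i=0$ and $\beta_i\lambda_i=0$, then $\delta_i\equiv 0$ and the discrepancy holds for every state, not just isolated ones. For $\alpha_i=1$, \eqref{eq:update_opinion} is not even defined. These cases must be excluded explicitly rather than absorbed into a null set.
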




Henceforth, our analysis is written from the perspective of characterising the equilibria for the coevolutionary model, but as indicated above, this provides equivalent characterisation of the Nash equilibria for the coevolutionary game. In this paper, we derive results under the following assumption.
\begin{assumption}\label{a:0}
    All players have zero prejudice, i.e. $\gamma_i = 0, \forall i \in \mathcal{V}$, and $\alpha_i, \beta_i, \lambda_i \in (0,1), \forall i \in \mathcal{V}$.
\end{assumption}
This assumption allows us to the focus on the role of the variables $\alpha_i, \beta_i,$ and $\lambda_i$, which are the parameters of the PGG, opinion dynamics, and self-consistency. Extending the results to allow for $\gamma_i > 0$ is an important future direction, to examine the impact of attachment to existing prejudice.


To better interpret the results, let us define three types of consensus that can arise. 

\begin{definition}[Classification of consensus states]
    A state $\vect z = (\vect x, \vect y)$ is said to be an {action consensus} if $x_i=x_j \ \forall i,j \in \mathcal{V}$. For any arbitrary $\vect y$, we call the action consensus state $\vect z = (\vect 0, \vect y)$ as the all-defection state, and $\vect z = (\vect 1, \vect y)$ as the all-cooperation state. Similarly, we define an {opinion consensus} as $y_i=y_j \ \forall i,j \in \mathcal{V}$. Then, for an arbitrary $\vect x$, the state $\vect z = (\vect x, c\vect 1)$ is an opinion consensus state for any $c\in [0,1]$. Finally, we call $\vect z = (\vect 0, \vect 0)$ the {all-defection consensus} state, and $\vect z = (\vect 1, \vect 1)$ the {all-cooperation consensus} state.
\end{definition}

We begin our equilibria analysis by showing that any equilibrium in which there is action consensus must in fact have an opinion consensus.

\begin{proposition}\label{prop:consensus}
    Consider the coevolutionary model under Assumption~\ref{a:0}, and let $\vect{z}^*=(x^*,y^*)$ be an equilibrium of the model. There holds $\vect x^* = \vect 0 \Rightarrow \vect y^* = \vect 0$ and $\vect x^* = \vect 1 \Rightarrow \vect y^* = \vect 1$. That is, an equilibrium $\vect z^*$ with an action consensus must in fact also have an opinion consensus.  
\end{proposition}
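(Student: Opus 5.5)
The argument works directly from the equilibrium conditions \eqref{eq:update_action}--\eqref{eq:update_opinion} with $\gamma_i=0$ (Assumption~\ref{a:0}). At an equilibrium $\vect z^*=(\vect x^*,\vect y^*)$, every player satisfies $x_i^*=s_i(\vect y^*)$. Substituting this into the opinion equation gives
\begin{equation*}
y_i^* = \frac{\beta_i\sum_{j\in\mathcal V} w_{ij}y_j^* + \lambda_i x_i^*}{\beta_i+\lambda_i}, \qquad \forall i\in\mathcal V .
\end{equation*}
Suppose first that $\vect x^*=\vect 0$. The relation becomes $y_i^* = \theta_i \sum_j w_{ij} y_j^*$, where $\theta_i := \beta_i/(\beta_i+\lambda_i)$. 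Since $\beta_i,\lambda_i\in(0,1)$, we have $\theta_i\in(0,1)$.

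For this case I would use a maximum-principle argument. Let $M=\max_i y_i^*\ge 0$, attained at some index $k$. Row-stochasticity of $\mat W$ and $\vect y^*\in[0,1]^n$ give $M = y_k^* = \theta_k\sum_j w_{kj}y_j^* \le \theta_k M$. Because $\theta_k<1$, this forces $M\le 0$, so $M=0$. Together with nonnegativity, $\vect y^*=\vect 0$. Equivalently, one can write $\vect y^* = \Theta \mat W \vect y^*$ with $\Theta=\diag(\theta_i)$ and note that $\Theta\mat W$ has all row sums strictly less than $1$. Its spectral radius is then below $1$, so $\vect I-\Theta\mat W$ is invertible and the only solution is $\vect 0$. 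I prefer the max argument because it is elementary.

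The case $\vect x^*=\vect 1$ is symmetric. Set $e_i := 1-y_i^*\ge 0$. Using $\sum_j w_{ij}=1$, the equilibrium relation transforms into
\begin{equation*}
e_i = \frac{\beta_i\sum_j w_{ij}(1-y_j^*)}{\beta_i+\lambda_i} = \theta_i\sum_j w_{ij} e_j .
\end{equation*}
This is the same homogeneous system as before, so the same maximum argument gives $\vect e=\vect 0$, that is, $\vect y^*=\vect 1$.

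I do not expect a real obstacle; the proof is essentially a contraction argument. The only points to check are the following:
\begin{itemize}
\item strict positivity of $\lambda_i$, which gives $\theta_i<1$ and is guaranteed by Assumption~\ref{a:0};
\item the reliance on $\gamma_i=0$, which removes the prejudice term that would otherwise prevent consensus;
\item the fact that the argument never needs the sign of $\delta_i(\vect y^*)$, because we are given $\vect x^*$ and only use the opinion equation. Consistency of $\vect x^*$ with $s_i(\vect y^*)$ is simply part of the hypothesis that $\vect z^*$ is an equilibrium.
\end{itemize}
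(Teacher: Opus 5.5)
Your proof is correct and is essentially the paper's argument. The paper writes the $\vect x^*=\vect 0$ case as $(\mat I-\mat{\phi}\mat{W})\vect y^*=\vect 0$ and uses the fact that the row sums of $\mat{\phi}\mat{W}$ are strictly below $1$ to get $\rho(\mat{\phi}\mat{W})<1$; this is exactly your stated alternative, and your maximum-principle step is the elementary $\infty$-norm version of that bound. Likewise, your substitution $e_i=1-y_i^*$ is equivalent to the paper's observation that $\vect 1$ solves $\vect y^*=\mat{\phi}\mat{W}\vect y^*+(\mat I-\mat{\phi})\vect 1$, with uniqueness following from the invertibility of $\mat I-\mat{\phi}\mat{W}$.
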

\begin{proof}
    We first show that $\vect{x}^*= \vect{0} \Rightarrow~\vect{y}^*=\vect{0}$. At an equilibrium $\vect z^* = (\vect 0, \vect y^*)$, $x_i^* = s(\delta_i(\vect y^*)) = 0$ for all~$i$. Thus, \eqref{eq:update_opinion} at $\vect y^*$ evaluates, for all $i\in\mathcal V$, to be
    \begin{equation}
        y_i^* = \dfrac{\beta_i}{\beta_i+\lambda_i}\sum\nolimits_{j\in\mathcal{V}}w_{ij}y_j^*.
    \end{equation}
    Let us define $\beta_i/\beta_i+\lambda_i$ as the $i^{th}$ diagonal entry of the diagonal matrix $\mat\phi$. 
    We write the set of $y_i^*$ equilibria equations as $(\mat I- \mat{\phi}\mat{W})\vect{y}^* = \vect 0$, with
    $\vect y^* = \vect 0$ obviously a solution. To prove it is in fact the unique solution, we prove the matrix $\mat{I}-\mat{\phi}\mat{W}$ is invertible, yielding $\vect{y}^* = \left(\mat{I} - \mat{\phi}\mat{W}\right)^{-1}\vect 0$. Assumption~\ref{a:0} implies that $\beta_i/(\beta_i+\lambda_i) < 1$, and thus $\mat{\phi W}$ is a nonnegative matrix with all row sums strictly less than 1. Standard nonnegative matrix theory yields that $\rho({\mat \phi W}) < 1$ (e.g. by application of \cite[Corollary 8.1.29]{horn2012matrixbook}). Thus, $\mat I - \mat{\phi W}$ cannot have a zero eigenvalue and is in fact invertible.
    
    Next, we show $\vect{x}^*= \vect{1} \Rightarrow~\vect{y}^*=\vect{1}$. The hypothesis that $\vect x^* = \vect 1$ implies that $s_i(\delta_i(\vect y^*))=1$ must hold for all $i \in \mathcal{V}$. 
    Hence, at an equilibrium $\vect{y}^*$, there must hold
    \begin{equation}
        y_i^* = \dfrac{\beta_i}{\beta_i+\lambda_i}\sum\nolimits_{j\in\mathcal{V}}w_{ij}y_j^* + \dfrac{\lambda_i}{\beta_i+\lambda_i}.
    \end{equation}
    The set of equilibrium equations can thus be expressed as $\vect{y}^* = \mat{\phi}\mat{W}\vect{y}^* + (\mat{I} - \mat{\phi})\vect 1$. 
    The fact that $\mat W\vect 1 = \vect 1$ implies that $\vect y^* = \vect 1$ is a solution. We proved above that $\mat I - \mat{\phi W}$ is invertible, allowing us to rewrite $\vect y^* = (\mat I - \mat{\phi W})^{-1}(\mat{I} - \mat{\phi})\vect 1$. Clearly, the solution $\vect y^*$ is unique, and must be $\vect y^* = \vect 1$.
\end{proof}

\begin{theorem}\label{theorem:equilibria}
    Under Assumption~\ref{a:0}, the all-defection consensus state $(\vect{0},\vect{0})$ is always an equilibrium of the coevolutionary model. Moreover, it is the unique equilibrium of the coevolutionary model if there holds
    \begin{align}\label{eq:all_D_unique}
        \frac{\beta_i\lambda_i}{\beta_i+\lambda_i} \leq 2\alpha_i\left(1-\frac{r}{n}\right),
    \end{align}
    for all $i\in\mathcal V$. If instead
       \begin{align}\label{eq:all_coop}
        \frac{\beta_i\lambda_i}{\beta_i+\lambda_i} > 2\alpha_i\left(1-\frac{r}{n}\right),
    \end{align}
    for all $i\in\mathcal V$, then the all-cooperation consensus state $(\vect{1},\vect{1})$ is an equilibrium.
\end{theorem}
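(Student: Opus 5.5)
With $\gamma_i=0$ (Assumption~\ref{a:0}), the discriminant \eqref{eq:discriminant} becomes
\begin{equation*}
\delta_i(\vect y)=\alpha_i\left(\frac{r}{n}-1\right)+\frac{\beta_i\lambda_i}{\beta_i+\lambda_i}\Big(\sum\nolimits_{j}w_{ij}y_j-\frac{1}{2}\Big).
\end{equation*}
Since $\mat W$ is row-stochastic and $\vect y\in[0,1]^n$, the weighted average $\sum_j w_{ij}y_j$ lies in $[0,1]$. All three claims follow by bounding $\delta_i$ through this average and then invoking Proposition~\ref{prop:consensus}.

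\emph{Step 1: $(\vect 0,\vect 0)$ is always an equilibrium.} At $\vect y=\vect 0$ we get $\delta_i=\alpha_i(r/n-1)-\tfrac12\beta_i\lambda_i/(\beta_i+\lambda_i)$. This is strictly negative because $r<n$, $\alpha_i>0$, and $\beta_i,\lambda_i>0$. Hence $s_i(\vect 0)=0$ by \eqref{eq:update_s}. Substituting into \eqref{eq:update_opinion} with $\gamma_i=0$ gives $y_i(t+1)=\beta_i\sum_j w_{ij}\cdot 0/(\beta_i+\lambda_i)=0$. So both \eqref{eq:update_action} and \eqref{eq:update_opinion} are fixed at $(\vect 0,\vect 0)$.

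\emph{Step 2: uniqueness under \eqref{eq:all_D_unique}.} Bounding the average by $1$ gives
\begin{equation*}
\delta_i(\vect y)\le \alpha_i\left(\frac{r}{n}-1\right)+\frac{1}{2}\frac{\beta_i\lambda_i}{\beta_i+\lambda_i}\le 0
\end{equation*}
for every $\vect y\in[0,1]^n$, where the last inequality is exactly \eqref{eq:all_D_unique}. Thus $s_i(\vect y)=0$ for every $i$ and every $\vect y$, using the tie-breaking convention that $\delta_i=0$ gives defection. Any equilibrium therefore satisfies $x_i^*=s_i(\vect y^*)=0$, i.e.\ $\vect x^*=\vect 0$, and Proposition~\ref{prop:consensus} forces $\vect y^*=\vect 0$. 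The only point needing care is this tie case: the non-strict inequality in \eqref{eq:all_D_unique} is admissible precisely because \eqref{eq:update_s} assigns $\delta_i=0$ to defection. I expect this to be the only subtle step.

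\emph{Step 3: all-cooperation under \eqref{eq:all_coop}.} At $\vect y=\vect 1$ the average equals $1$, so
\begin{equation*}
\delta_i(\vect 1)=\alpha_i\left(\frac{r}{n}-1\right)+\frac12\frac{\beta_i\lambda_i}{\beta_i+\lambda_i}>0
\end{equation*}
by \eqref{eq:all_coop}. Hence $s_i(\vect 1)=1$. Substituting into \eqref{eq:update_opinion} gives $(\beta_i\cdot 1+\lambda_i)/(\beta_i+\lambda_i)=1$. Therefore $(\vect 1,\vect 1)$ is a fixed point, completing the proof.
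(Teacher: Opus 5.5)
Your proof is correct and follows the paper's argument. The uniqueness part is identical: the condition forces $\delta_i(\vect y)\le 0$ for every $\vect y$, so $\vect x^*=\vect 0$, and Proposition~\ref{prop:consensus} then gives $\vect y^*=\vect 0$. For the two existence claims you check the fixed-point equations directly at $\vect y=\vect 0$ and $\vect y=\vect 1$, which is slightly cleaner than the paper's unnecessary detour through Proposition~\ref{prop:consensus}. Your tie-case remark is exactly right: the non-strict inequality in \eqref{eq:all_D_unique} works only because \eqref{eq:update_s} breaks ties toward defection.
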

\begin{proof}
    To prove the first statement, we need to show that at an equilibrium $\vect z^* = (\vect x^*,\vect y^*)$, it is always true that i) $\vect x^* = \vect 0 \Rightarrow \vect y^* = \vect 0$, and ii) $\vect y^* = \vect 0 \Rightarrow \vect x^* = \vect 0$. The first part is immediate from Proposition~\ref{prop:consensus}. For the second part, observe that when $\vect y^* = \vect 0$, \eqref{eq:discriminant} evaluates to be
    \begin{equation}
        \delta_i(\vect y^*) = \alpha_i\bigg(\frac{r}{n}-1\bigg) - \frac{\beta_i\lambda_i}{2(\beta_i+\lambda_i)} \leq 0
    \end{equation}  
    because $r<n$ (recall $r$ is the public good multiplier). In other words, $x_i^* = 0$ for all $i$, hence $\vect z^* = (\vect 0,\vect 0)$ is always an equilibrium of the coevolutionary model.

    To prove the second statement, notice that if
    \begin{equation}\label{eq:01}
        \dfrac{\alpha_i(1-\alpha_i)\Big(1-\dfrac{r}{n}\Big)}{\beta_i\lambda_i}+\frac{1}{2} \geq1,
    \end{equation}
    then $\delta_i(\vect y^*) \leq 0$ holds independent of the particular value of $\vect y^*$, making $\vect z^*=(\vect 0,\vect 0)$ the unique equilibrium. Rearranging \eqref{eq:01} yields \eqref{eq:all_D_unique}. 

    To prove the all-cooperation consensus state is an equilibrium we must show that at an equilibrium $\vect z^* = (\vect x^*,\vect y^*)$, i) $\vect x^* = \vect 1 \Rightarrow \vect y^* = \vect 1$, and ii) $\vect y^* = \vect 1 \Rightarrow \vect x^* = \vect 1$. The first part is obtained from Proposition~\ref{prop:consensus}. For the second part, observe that under the assumption $\vect y^* = \vect 1$, \eqref{eq:discriminant} evaluates to be
    \begin{equation}\label{eq:02}
        \delta_i(\vect y^*) = \alpha_i\bigg(\frac{r}{n}-1\bigg) + \frac{\beta_i\lambda_i}{2(\beta_i+\lambda_i)}.
    \end{equation} 
    For a player to cooperate, $\delta_i(\vect y^*)>0$. Setting the condition in \eqref{eq:02} to be greater than zero and rearranging yields \eqref{eq:all_coop}. Therefore, if \eqref{eq:all_coop} holds for all $i \in \mathcal{V}$ then the all-cooperation consensus state is an equilibrium. 
\end{proof}

Interestingly, the conditions in \eqref{eq:all_D_unique} and \eqref{eq:all_coop} depict a trade-off between the role of self-consistency and opinion dynamics (left-hand side) and of the PGG (right-hand side). Stable cooperation seems possible when self-consistency and opinion dynamics can compensate for the fact that defection is inherently more advantageous for the single individual in the PGG. Note that when~\eqref{eq:all_D_unique} does not hold, there may be other equilibria besides the all-defection and all-cooperation consensus states, with a mixture of defectors and cooperators.



%

\subsection{Convergence}

Theorem~\ref{theorem:equilibria} identifies conditions on the individual-level parameters that imply that $\vect z^*=(\vect 0,\vect 0)$ is the unique equilibrium. Now, we prove that the same conditions yield convergence to said equilibrium for all initial conditions.
\begin{theorem}\label{theorem:convergence}
    Consider the coevolutionary model under Assumptions~\ref{assumption:revision_sequence} and \ref{a:0}. Suppose that $\mat W$ is symmetric and irreducible, i.e., $\mathcal G$ is undirected and connected. Suppose further that \eqref{eq:all_D_unique} holds for all $i\in\mathcal V$.
    Then, for all $(\vect x(0),\vect y(0))$, there holds $\vect x(t)=\vect 0$ for all $t\geq T$ and $\lim_{t\to\infty}\vect y(t)=\vect{0}$, where $T$ is given in Assumption~\ref{assumption:revision_sequence}.
\end{theorem}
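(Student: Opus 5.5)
The plan is to show first that, under \eqref{eq:all_D_unique}, every activation produces defection regardless of the opinion profile. Then, once everyone defects, the opinion dynamics reduce to a strictly contracting, nonnegative averaging process. The symmetry and irreducibility of $\mat W$ should not actually be needed for this route; only row-stochasticity and Assumption~\ref{a:0} are used. This matches the remark in the proof of Theorem~\ref{theorem:equilibria} that under \eqref{eq:all_D_unique} the sign of $\delta_i$ is independent of $\vect y$.

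\textbf{Step 1 (actions).} Under Assumption~\ref{a:0} we have $\gamma_i=0$. Since $\vect y(t)\in[0,1]^n$ and $\mat W$ is row-stochastic, $\sum_j w_{ij}y_j(t)\le 1$, so
\begin{equation*}
\delta_i(\vect y(t))\le \alpha_i\Big(\frac{r}{n}-1\Big)+\frac{\beta_i\lambda_i}{2(\beta_i+\lambda_i)}\le 0
\end{equation*}
by \eqref{eq:all_D_unique}. By \eqref{eq:update_s}, which breaks ties toward defection, $s_i(\vect y(t))=0$ for every $i$ and every $t$. Hence any player in $\mathcal R(t)$ sets $x_i(t+1)=0$, and inactive players keep their action. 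By Assumption~\ref{assumption:revision_sequence} with $t=0$, every player is active at least once in $\{0,\dots,T-1\}$. Since defection is never abandoned, $\vect x(t)=\vect 0$ for all $t\ge T$.

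\textbf{Step 2 (opinions).} I will first check that $[0,1]^n$ is invariant under \eqref{eq:update_opinion}, since each update is a convex combination of values in $[0,1]$. Because $s_i\equiv 0$ from $t=0$ onward, an active player updates as $y_i(t+1)=\phi_i\sum_j w_{ij}y_j(t)$, where $\phi_i=\beta_i/(\beta_i+\lambda_i)$. Let $\bar\phi=\max_i\phi_i<1$, which holds by Assumption~\ref{a:0}, and let $M(t)=\max_i y_i(t)\ge 0$.
\begin{itemize}
\item Active updates give $y_i(t+1)\le \bar\phi M(t)\le M(t)$, and inactive entries are unchanged, so $M(t)$ is nonincreasing.
\item Fix $t$ and consider the window $\{t,\dots,t+T-1\}$. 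Each player $i$ updates at least once in it. After its last update at time $s_i$, we have $y_i(s_i+1)\le\bar\phi M(s_i)\le\bar\phi M(t)$, and this value persists until $t+T$.
\item Therefore $M(t+T)\le\bar\phi M(t)$, giving $M(kT)\le\bar\phi^k M(0)\to 0$. Monotonicity of $M$ then yields $\vect y(t)\to\vect 0$, with a geometric rate.
\end{itemize}

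\textbf{Main obstacle.} The only delicate point is the windowed contraction in Step 2. An entry updated early in the window must not be able to grow back later in the window. This is handled by monotonicity of $M$ together with the choice of each player's last update in the window.
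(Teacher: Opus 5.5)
Your proof is correct. Step 1 matches the paper's, but Step 2 takes a genuinely different route.

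The paper handles the opinions with a potential-game argument. It introduces $\Phi(\vect y)=-\frac12\vect y^\top(\mat I-\mat W+\mat\Lambda)\vect y$ with $\mat\Lambda=\diag(\lambda_1/\beta_1,\dots,\lambda_n/\beta_n)$. It uses symmetry of $\mat W$ to get the exact-potential identity $\pi_i(s,\vect y_{-i})-\pi_i(s',\vect y_{-i})=\beta_i\bigl(\Phi(s,\vect y_{-i})-\Phi(s',\vect y_{-i})\bigr)$. It then deduces that $\Phi(\vect y(t))$ is nondecreasing and bounded by $0$, argues that the limit is an equilibrium, and identifies it as $\vect 0$ via Proposition~\ref{prop:consensus}.

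Your windowed sup-norm contraction $M(t+T)\le\bar\phi M(t)$ replaces all of this, and it buys several things:
\begin{itemize}
\item It uses only row-stochasticity and $\lambda_i>0$, so symmetry and irreducibility of $\mat W$ are indeed unnecessary.
\item It gives an explicit geometric rate.
\item It does not care how many players revise at once. The paper's monotonicity step is written for a single reviser per step, since the potential identity only controls unilateral changes.
\item Because $s_i\equiv 0$ from $t=0$, your contraction starts immediately rather than after $T$.
\item It avoids the paper's passage from convergence of $\Phi(\vect y(t))$ to ``$\vect y(t+T)=\vect y(t)$ in the limit''. As written, that step needs a further continuity/compactness argument.
\end{itemize}

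In return, the paper's viewpoint is more structural. For symmetric $\mat W$, the full payoff \eqref{eq:full_payoff}, including the action and self-consistency terms, defines a weighted potential game. The PGG contributes only the self-term $\alpha_i x_i(r/n-1)$ to unilateral deviations. That makes the potential the natural tool for regimes where \eqref{eq:all_D_unique} fails and actions keep switching, such as the convergence conjecture in the conclusion. Your contraction relies on $s_i$ being frozen at $0$.

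A cosmetic point: Step 1 already uses $\vect y(t)\in[0,1]^n$, which you only verify in Step 2. Invariance holds whatever the value of $s_i$, so there is no circularity, but state it first.
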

\begin{proof}
In this proof, we first establish convergence of $\vect x(t)$ to $\vect 0$ in finite time. Then, we show that once $\vect x(t)$ has converged, $\vect y(t)$ will converge asymptotically to $\vect 0$.

Observe that if \eqref{eq:all_D_unique} holds for all agents, then $\delta_i(\vect y(t))\leq0$, for all $i \in \mathcal{V}$ and $t\geq 0$, irrespectively of $\vect{y}(t)$. In other words, if player~$i$ is active at timestep $t$, their best-response will involve defection for the PGG (i.e., $x_i(t+1)=0$).
Under Assumption \ref{assumption:revision_sequence}, after $T$ timesteps, there holds $\vect{x}(t) = \vect 0, \forall t \geq T$. In the following, without loss of generality, we focus on analysis of $\vect y(t)$ for $t\geq T$. 

Given that the action vector is at the all-defection action consensus state, $\vect x(t) = \vect 0$, for all $t\geq T$, the payoff that an arbitrary but fixed player~$i$ receives for having action $0$ and opinion $s\in [0,1]$ for any time $t\geq T$ simplifies to
\begin{equation}\label{eq:payoff_all_defect}
    \pi_i((0,s),\vect{z}_{-i}) = -\dfrac{1}{2}\bigg[ \beta_i \sum_{j\in \mathcal{V}} w_{ij}(s-y_j)^2 + \lambda_i s^2 \bigg].
\end{equation}
Now, consider the following potential function:
\begin{align}\label{eq:Phi_sum}
    \Phi(\vect y) = -\dfrac{1}{2} \sum_{i\in \mathcal{V}} \left[\sum_{j\in \mathcal{V}} \dfrac{w_{ij}}{2}(y_i-y_j)^2 + \dfrac{\lambda_i}{\beta_i}y_i^2\right].
\end{align}
Our hypothesis that $\mat W$ symmetric, i.e. $w_{ij}=w_{ji}$ for all $i,j \in \mathcal{V}$, means that $\Phi$ can be expressed as
\begin{align}\label{eq:Phi_sum_individual}
    &\Phi(y_i,\vect y_{-i}) = -\frac{1}{2}\bigg[\sum_{j\in \mathcal{N}_i} w_{ij}(y_i-y_j)^2 \bigg]  -\frac{1}{2}\frac{\lambda_i}{\beta_i}y_i^2 \notag \\
    &-\frac{1}{4}\sum_{k \in \mathcal{V} \setminus \{i\}}\sum_{\ell \in \mathcal{V} \setminus \{i\}} w_{k\ell}
    (y_k - y_\ell)^2 + \frac{1}{2}\bigg[ \sum_{k\in \mathcal{V}\setminus\{i\}} \frac{\lambda_k}{\beta_k}y_k^2 \bigg].
\end{align}
Notice here that we have simply rewritten $\Phi$ to highlight a fixed but arbitrary player $i \in \mathcal{V}$.

By exploiting \eqref{eq:payoff_all_defect} and \eqref{eq:Phi_sum_individual}, it can be verified that given two opinions, $s$ and $s'$ for player $i$, there holds
\begin{equation}\label{eq:payoff_potential}
    \pi(s,\vect{y}_{-i}) - \pi(s',\vect{y}_{-i}) = \beta_i(\Phi(s, \vect{y}_{-i}) - \Phi(s', \vect{y}_{-i})).
\end{equation}
Verify that the quadratic form of \eqref{eq:Phi_sum} is $\Phi(\vect y) = -\frac{1}{2} [ \vect y^\top(\mat I - \mat W + \mat{\Lambda})\vect y]$,
where $\mat{\Lambda} = \diag(\lambda_1/\beta_1, \cdots,\lambda_n/\beta_n)$ is a positive diagonal matrix. Note that $\mat I - \mat W + \mat \Lambda$ is a symmetric nonsingular $M$-matrix~\cite[Theorem~4.31]{qu2009cooperative_book}, and thus positive definite, which means $\Phi$ has a unique maximum equal to $0$, precisely at the maximiser $\vect y = \vect 0$.


Let player~$i$ be the active player at time $t\geq T$. From Definition~\ref{def:BR}, it is clear that the best-response dynamics implies that $\pi_i((0,y_i(t+1),\vect z_{-i}(t)) \geq \pi_i((0,y_i(t),\vect z_{-i}(t))$, with a strict inequality if $y_i(t+1) \neq y_i(t)$. This implies, from \eqref{eq:payoff_potential}, that $\Phi(\vect{y}(t+1))-\Phi(\vect{y}(t))\geq 0$, with a strict inequality if $y_i(t+1) \neq y_i(t)$. In other words, every timestep in which the active agent revises their opinion to a different value results in an increase in the potential $\Phi$. Since $\Phi(\vect y(t))$ is a monotonically non-decreasing sequence, and is bounded from above by $0$, the monotone convergence theorem~\cite{bartle1964elements} leads to the conclusion that $\lim_{t\to\infty} \Phi(\vect y(t)) = \Phi^*(\vect y^*)$. Our final step is to prove that $\Phi^* = 0$, or that $\vect y^* = \vect 0$. To see this, observe that in the limit $t\to\infty$, $\Phi(t+T) = \Phi(t)$, which is equivalent to $\vect y(t+T) = \vect y(t)$. From Assumption~\ref{assumption:revision_sequence}, this means that every agent has activated and has not changed their opinion value. In other words, $\vect y(t)$ has reached an equilibrium $\vect y^*$. We concluded earlier that $\vect x(t) = \vect 0$ for all $t\geq T$, and combined with Proposition~\ref{prop:consensus}, it follows that $\vect y^* = \vect 0$, which completes the proof.
\end{proof}


\section{Conclusion}\label{sec:conclusion}

This paper introduced a mathematical model that examined the coevolution of opinions and actions in social dilemmas. We established explicit expressions for a player's best-response for the coevolutionary game and analysed the model's equilibria, focusing on the all-defection and all-cooperation consensus states. A sufficient condition for global stability of the all-defection consensus equilibrium was identified. These promising results suggest several avenues of future research. First, we aim to establish an explicit region of attraction for the all-cooperation consensus equilibrium. Second, extending from Theorem~\ref{theorem:equilibria}, we are interested in identifying conditions for the existence of equilibria with a mixture of cooperators and defectors. We also aim to show that the dynamics will always converge to an equilibrium, as extensive simulations support this conjecture.

\appendix

\textit{Proof of Proposition~\ref{prop:BR}:} Note that as $x_i\in\{0,1\}$, we can determine the maximisers of $\pi_i$ by evaluating the maximum of the function when $x_i=0$ and $x_i=1$, and obtain the global maximizer (i.e., the best-response) by comparing the two values obtained. We first consider the case where $x_i=1$, and differentiate $\pi_i$ with respect to $y_i$ to obtain
\begin{align}
    \pi'_i((1,y_i),\vect{z}_{-i}) = &-\beta_i(1-\gamma_i)\bigg(y_i\sum_{j\in\mathcal{V}}w_{ij} -\sum_{j\in\mathcal{V}}w_{ij}y_j\bigg) \notag\\ 
    &\qquad -\beta_i\gamma_i(y_i-u_i)-\lambda_i(y_i-1).
\end{align}
Since $\pi''_i((1,y_i),\vect z_{-i}) = -\beta_i-\lambda_i<0$, it follows that $\pi_i((1,y_i),\vect z_{-i})$ is strictly concave and has a unique maximum, allowing us to solve $\pi'_i(1,y_i) = 0$ in order to find the maximiser $y_i^1$. The same process can be followed for $x_i=0$, and we can solve $\pi'_i(0,y_i) = 0$ to find the maximiser $y_i^0$. Define $h=\beta_i(1-\gamma_i)\sum_{j\in\mathcal{V}}w_{ij}y_j+\beta_i\gamma_i u_i$ and the two maximisers can be written as
\begin{align}
    y_i^{1}=\dfrac{\beta_ih+\lambda_i}{\beta_i+\lambda_i}\;\;\text{and}\;\;
    y_i^{0}=\dfrac{\beta_ih}{\beta_i+\lambda_i}.
\end{align}
Evidently, the payoff to cooperate and defect is maximised by adopting the opinion $y_i^1$ and $y_i^0$ respectively. It follows that the best-response corresponds to the greater of $\pi_i((1,y_i^{1}),\vect{z}_{-i})$ versus $\pi_i((0,y_i^{0}),\vect{z}_{-i})$. Verify that the discriminant $\delta_i(z_i,\vect{z}_{-i})$ in \eqref{eq:discriminant} is precisely $\pi_i((1,y_i^{1}),\vect{z}_{-i})-\pi_i((0,y_i^{0}),\vect{z}_{-i})$.
Thus, $\delta_i(z_i,\vect{z}_{-i}) > 0$ and $\delta_i(z_i,\vect{z}_{-i}) < 0$ indicate that $(1,y_i^{1})$ and $(0,y_i^{0})$ are the best-response strategies, respectively. If $\delta_i(z_i,\vect{z}_{-i}) = 0$, both $(1,y_i^{1})$ and $(0,y_i^{0})$ are best-responses. \hfill $\qed$

\bibliographystyle{IEEEtran}
\bibliography{ref}

\end{document}